\title{Morse-Darboux lemma for surfaces with boundary}
\author{Ilia Kirillov\thanks{
e-mail: {\tt ikirillov@abc.math.msu.su}
}}
\affil{Lomonosov Moscow State University}
\date{}
\theoremstyle{plain}
\newtheoremstyle{dotless}{}{}{\itshape}{}{\bfseries}{}{ }{}
\newtheorem{theorem}{Theorem}
\newtheorem{Lemma}{Lemma}
\newtheorem{Con}{Corollary}
\newtheorem{Prep}{Proposition}
\newtheorem{Ex}{Example}
\theoremstyle{remark}
\newtheorem{Rem}{Remark}
\theoremstyle{definition}
\newtheorem{Def}{Definition}
\theoremstyle{dotless}
\newtheorem*{thm}{Theorem}
\begin{document}
\maketitle
\pagestyle{plain}

\begin{abstract}
	We formulate and prove an analog of the classical Morse-Darboux lemma for the case of a surface with boundary.
\end{abstract}


\section{Introduction}
Throughout this paper the word \textit{smooth} means $C^{\infty}$ smooth. The aim of this paper is to prove the following theorem.
\begin{theorem}
Let $M$ be a 2D surface with an area form $\omega$, and let $f:M\to\mathbb R$ be a smooth function. Let also $O\in \partial M$  be a regular point for $f$ and a non-degenerate critical point for $f|_{\partial M}.$ Then there exists a chart $(p,q)$ centered at $O$ such that we have $q \geq 0$ wherever q is defined, the boundary $\partial M$ satisfies the equation $q=0,$ $\omega=dp\wedge dq,$  and $f=\alpha \circ S$, where $S=q+p^2$ or $S=q-p^2$ (See Figure~\ref{fig1}). The function $\alpha$ of one variable is smooth in the neighborhood of the origin $0 \in \mathbb R$ and $\alpha'(0) \ne 0.$ 
\end{theorem}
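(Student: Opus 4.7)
The proof proceeds in two steps, patterned on the classical Morse--Darboux lemma: first bring $f$ into a normal form adapted to the boundary by an arbitrary diffeomorphism, and then adjust the area form by a Moser-type isotopy that preserves both the boundary and the new shape of $f$.

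\textbf{Step 1 (normal form for $f$).} Choose any boundary-adapted chart $(x, y)$ centered at $O$, with $\partial M = \{y = 0\}$ and $y \geq 0$ in the domain. Since $O$ is a non-degenerate critical point of $f|_{\partial M}$, the classical one-variable Morse lemma yields a diffeomorphism $x \mapsto \tilde x$ after which $f(\tilde x, 0) = f(O) \pm \tilde x^2$. The hypothesis $df(O) \neq 0$ together with $df(O)|_{T_O \partial M} = 0$ gives $\partial_y f(O) \neq 0$, so Hadamard's lemma writes $f(\tilde x, y) = f(\tilde x, 0) + y\,k(\tilde x, y)$ with $k(0, 0) \neq 0$. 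Setting $\epsilon := \mathrm{sign}\,k(0, 0)$, $p := \tilde x$, $q := \epsilon\,y\,k(\tilde x, y)$ yields a smooth chart in which $q \geq 0$, $\partial M = \{q = 0\}$, and $f = f(O) + \epsilon(q \pm p^2) = \alpha_0(S)$ with $S = q \pm p^2$ and $\alpha_0$ affine. In these coordinates $\omega = g(p, q)\,dp \wedge dq$ for some smooth positive $g$.

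\textbf{Step 2 (Moser's trick).} Interpolate $\omega_t := (1 + t(g - 1))\,dp \wedge dq$ between $\omega_0 = dp \wedge dq$ and $\omega_1 = \omega$. I construct a time-dependent vector field $X_t$ whose flow $\psi_t$ satisfies $\psi_t^*\omega_t = \omega_0$ (so that $\psi_1^*\omega = dp \wedge dq$) together with (a) $X_t$ tangent to $\partial M$ and (b) $X_t(S) = h_t(S)$ for some smooth function $h_t$ of one variable. Property (a) makes $\psi_1$ preserve the boundary; property (b) makes $\psi_1$ send level sets of $S$ to level sets of $S$. Using $\psi_1$ to define a new chart therefore produces coordinates in which $\omega = dp \wedge dq$, $\partial M = \{q = 0\}$, and $f = \alpha(S)$ with $S = q \pm p^2$ and $\alpha'(0) \neq 0$.

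\textbf{Main obstacle.} The heart of the proof is the construction of $X_t$ satisfying (a) and (b). Writing $\iota_{X_t}\omega_t = -F\,dp + dH_t$ with $F(p, q) := \int_0^q(1 - g(p, q'))\,dq'$ automatically solves the Moser equation for any smooth $H_t$; the freedom in $H_t$ is then used to enforce (a) and (b). Since $F(p, 0) \equiv 0$, condition (a) reduces to $(H_t)_p(p, 0) \equiv 0$, i.e.\ $H_t$ is constant along the boundary near $O$. Condition (b) becomes a linear first-order PDE for $H_t$ whose characteristics are the level curves of $S$. In the case $S = q + p^2$ each characteristic $\{S = c\}$ with $c > 0$ is an arc in the domain joining the two boundary points $(\pm\sqrt c, 0)$, so requiring $H_t$ to take the same value at both endpoints forces a consistency condition that uniquely determines $h_t(c)$ as a ratio of integrals along the characteristic. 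Verifying that $h_t$ is smooth at $c = 0$ is the main delicate point: one rewrites the relevant integrands as smooth functions of the even variable $p'^2$ and substitutes $p' = \sqrt c \cdot s$, whereupon the apparent $\sqrt c$-factors in numerator and denominator cancel and one is left with a smooth ratio. The case $S = q - p^2$ is easier, since there each characteristic meets the boundary at most once on each connected arc in the domain and no consistency condition arises.
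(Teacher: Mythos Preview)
Your approach is genuinely different from the paper's. The paper does \emph{not} use a Moser isotopy; after the boundary-adapted Morse normal form (your Step~1, their Lemma~1) it builds action--angle type coordinates directly: it introduces the area function $A_f(\varepsilon)=\int_{\{f\le\varepsilon,\,y\ge0\}}\omega$, proves that $A_f^{2/3}$ is smooth (Lemma~2), sets $\alpha=A_f^{-1}(\tfrac43\varepsilon^{3/2})$ and $H=\alpha^{-1}\circ f$, and then takes $p=-T_H$ (the time along the Hamiltonian flow of $H$ measured from a ``bisector'' curve) and $q=H-p^2$. The delicate smoothness point---your substitution $p'=\sqrt{c}\,s$---appears in the paper as the even/odd analysis in $w=\sqrt{\varepsilon}$ in Lemmas~2 and~3. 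Your Moser argument trades this explicit construction for a deformation; what it buys is conceptual familiarity from the boundaryless case, while the paper's construction makes the invariant $A_f$ explicit and avoids solving a PDE.

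Two points in your write-up deserve more care. First, you verify that $h_t(c)$ is smooth at $c=0$, but you do not verify that the resulting $H_t$ is smooth at the origin. The initial curve $\{q=0\}$ is \emph{characteristic} for your PDE at the origin (the characteristic direction $(-1,2p)$ is tangent to $\{q=0\}$ there), so solving from boundary data does not automatically give a smooth $H_t$. One way to close this: prescribe data on the transversal $\{p=0\}$ instead, choosing $H_t(0,c)=\int_0^{\sqrt c}R(p',c-p'^2)\,dp'$, and observe that by the consistency condition only the $p$-odd part of $R$ contributes, so after $u=p'^2$ this becomes $\tfrac12\int_0^c \tilde R(u,c-u)\,du$, which is smooth in $c$.

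Second, your dismissal of the case $S=q-p^2$ as ``easier'' is not justified. It is true that each connected arc of $\{S=c\}\cap\{q\ge0\}$ meets $\partial M$ at most once, so no single-arc consistency condition arises; but for $c<0$ there are \emph{two} such arcs and for $c\ge0$ the characteristic misses the boundary entirely, so you must still explain how to choose initial data so that the pieces glue to a globally smooth $H_t$ satisfying $(H_t)_p(p,0)\equiv0$. The paper avoids this altogether by reducing to a single case via the sign changes $f\mapsto -f$, $y\mapsto -y$ (its Proposition~1); you would be well served by the same reduction.
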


\bigskip

Theorem~$1$ is closely related to the classical Morse-Darboux lemma. Let us recall the statement of that lemma.
\begin{theorem}
Let $M$ be a 2D surface with an area form $\omega$, and let $f:M\to\mathbb R$ be a smooth function. Let also $O\in M\setminus \partial M$  be a non-degenerate critical point
for $f.$
Then there exists a chart $(p,q)$ centered at $O$ such that $\omega=dp\wedge dq,$ and $f=\alpha \circ S$, where $S=pq$ or $S=p^2+q^2$. The function $\alpha$ of one variable is smooth in the neighborhood of the origin $0 \in \mathbb R$ and $\alpha'(0) \ne 0.$
\end{theorem}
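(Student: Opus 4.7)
The plan is a two-stage reduction: first bring $\omega$ into Darboux form while ignoring $f$, then apply a parametric Morse lemma within the group of $\omega$-preserving diffeomorphisms to normalize $f$.

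\textit{Step 1: Darboux plus linear normalization.} In any initial chart around $O$, the area form is $\omega = g\,dx\wedge dy$ with $g(O)\neq 0$; after rescaling so that $g(O)=1$, the standard Moser trick applied to the interpolation $\omega_t = (1-t)\,dx\wedge dy + t\omega$ produces coordinates in which $\omega = dx\wedge dy$. Subtracting the constant $f(O)$, write $f = Q + R$, where $Q$ is the non-degenerate Hessian quadratic form and $R = O(|(x,y)|^3)$. Since $SL(2,\mathbb R)$ preserves $dx\wedge dy$ and brings any non-degenerate quadratic form on $\mathbb R^2$ into one of the models $\pm(x^2+y^2)$ or $\pm xy$, a linear symplectic change gives $Q = \varepsilon S$ with $\varepsilon=\pm 1$ and $S\in\{x^2+y^2,\,xy\}$. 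The sign $\varepsilon$ will be absorbed into the final $\alpha$, so I may assume $f = S + R$ in Darboux coordinates.

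\textit{Step 2: Homotopy method.} Consider the interpolation $f_t = S + tR$ for $t\in[0,1]$, and seek a time-dependent Hamiltonian $H_t$ (smooth near $O$, with symplectic flow $\phi_t$ starting at $\phi_0=\mathrm{id}$) together with a smooth family of one-variable functions $\alpha_t$, $\alpha_0=\mathrm{id}$, $\alpha_t'(0)\neq 0$, such that $\phi_t^* f_t = \alpha_t\circ S$. Differentiating in $t$ and pulling back produces the homological equation
\[
\{H_t,\,f_t\} + R = \dot\alpha_t(S),
\]
where $\{\cdot,\cdot\}$ is the Poisson bracket of $\omega$. The freedom to choose the correction $\dot\alpha_t$ is exactly what is needed to eliminate the obstruction: at each time $t$, $\dot\alpha_t(c)$ must equal the part of $R$ that is invariant along the Hamiltonian orbits of $f_t$ on the level $\{f_t=c\}$.

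\textit{Step 3: Solvability, case by case.} In the \emph{elliptic} case $S=x^2+y^2$, the Hamiltonian orbits of $f_t$ are small closed curves for $f_t>0$ collapsing to $O$ as $f_t\to 0^+$, and the obstruction to solving the homological equation is killed by setting $\dot\alpha_t(c)$ equal to the orbit-average of $R$ over $\{f_t=c\}$; smoothness of this average through $c=0$ follows from $R = O(|(x,y)|^3)$ and the standard action-angle machinery. In the \emph{hyperbolic} case $S=xy$, the orbits are non-compact hyperbolic arcs and the singular orbit $\{f_t=0\}$ meets $O$ along two transverse branches; solvability of the homological equation must then be achieved either by an integral along the Hamiltonian flow combined with a division lemma for smooth functions vanishing on the cross $\{xy=0\}$, or by a hyperbolic analog of the averaging argument.

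The main obstacle I expect is securing smoothness of the constructed $H_t$, and hence of the final $\alpha$, at the origin --- particularly in the hyperbolic case, where orbits accumulate at the saddle and invariants defined by integrals along them may lose regularity across the singular level. Once $\phi_1$ has been produced, denoting its coordinates $(p,q)$ yields the required chart with $\omega = dp\wedge dq$ and $f = \alpha\circ S$, with $\alpha'(0)\neq 0$ because $\alpha_0=\mathrm{id}$ and the correction $\dot\alpha_t(0)$ is bounded on $[0,1]$.
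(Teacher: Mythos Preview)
The paper does not actually prove this statement: Theorem~2 is the classical Morse--Darboux lemma, recalled as background and attributed to the isochore Morse lemma of Colin de Verdi\`ere--Vey and to Eliasson. The paper's own contribution is the boundary analog (Theorem~1), and that proof is \emph{not} a Moser homotopy argument but a direct construction: after a Morse-type reduction to $f=x^2+y$, one defines $\alpha$ explicitly from the area invariant $A_f(\varepsilon)=\int_{\{f\le\varepsilon,\,y\ge 0\}}\omega$ via $\alpha^{-1}(\varepsilon)=(\tfrac34 A_f(\varepsilon))^{2/3}$, proves smoothness by the substitution $\delta=\sqrt{\varepsilon}$, and then sets $p=-T_H$, $q=H-p^2$, where $H=\alpha^{-1}\circ f$ and $T_H$ is the Hamiltonian travel time from a bisector curve. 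Your route---Darboux, then a Moser path $f_t=S+tR$ with a homological equation corrected by $\dot\alpha_t$---is essentially the strategy of the cited references, so it is a legitimate alternative; it is more conceptual but less explicit than the paper's action--time construction.

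However, your proposal has a genuine gap, which you yourself flag: the hyperbolic case $S=xy$. Solving $\{H_t,f_t\}=\dot\alpha_t(f_t)-R$ at a saddle means inverting the operator $x\partial_x - y\partial_y$ (to leading order), and the orbits along which one would integrate are non-compact and reach $O$ only in infinite time. Neither ``averaging'' nor a bare division lemma on $\{xy=0\}$ yields a smooth $H_t$ through the origin; this is exactly the delicate step in Colin de Verdi\`ere--Vey and in Eliasson, and it requires either a Sternberg-type argument, a jet-by-jet induction with a flat-remainder estimate, or a careful cohomological analysis of that operator. Until you supply one of these, Step~3 in the hyperbolic case is an outline, not a proof. (Minor: your last sentence does not establish $\alpha'(0)\ne 0$; boundedness of $\dot\alpha_t(0)$ controls $\alpha_t(0)$, not $\alpha_t'(0)$. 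The correct argument compares quadratic parts in $\phi_t^*f_t=\alpha_t\circ S$.)
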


\begin{figure}
\centering
\subfigure[Case $S=q+p^2.$]{
	\begin{tikzpicture}[scale=2.5]
	\filldraw (0,0) circle (0.2pt);
	\clip (-1,-0.2) rectangle (1.2,1.2);
	\draw[->, thick] (-1,0) -- (1,0) node[right] {$p$};
	\draw[->] (0,0) -- (0,1) node[above] {$q$};
	\foreach \x in {0.4, 0.3, 0.2, 0.1, 0.0}
		\draw  (-1,\x+1) parabola bend (0,\x) (1,\x+1);
	\foreach \x in {0.4, 0.3, 0.2, 0.1}
		\draw[domain={sqrt(\x)}:1, smooth, variable=\y] plot ({\y},{\y*\y-\x});
	\foreach \x in {0.4, 0.3, 0.2, 0.1}
		\draw[domain=-1:{-sqrt(\x)}, smooth, variable=\z] plot ({\z},{\z*\z-\x});
	\end{tikzpicture}}
\subfigure[Case $S=q-p^2.$]{
	\begin{tikzpicture}[scale=2.5]
	\filldraw (0,0) circle (0.2pt);
	\clip (-1,-0.2) rectangle (1.2,1.2);
	\draw[->, thick] (-1,0) -- (1,0) node[right] {$p$};
	\draw[->] (0,0) -- (0,1) node[above] {$q$};
	\foreach \x in {0.0, 0.1, 0.2, 0.3, 0.4, 0.5, 0.6, 0.7, 0.8, 0.9}
		\draw[domain={-sqrt(\x)}:{sqrt(\x)}, smooth, variable=\y] plot ({\y},{\x-\y*\y});
	\end{tikzpicture}}
\caption{Level sets of $f$. The horizontal axis is the boundary of $M.$}
\label{fig1}
\end{figure}
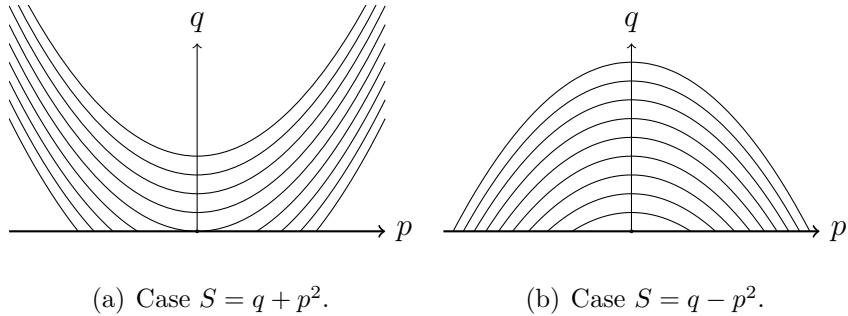

The Morse-Darboux lemma is a particular case of Le lemme de Morse isochore, see \cite{C}, and also a particular case of Eliasson's theorem on the normal form for  an integrable Hamiltonian system near a non-degenerate critical point, see \cite{E,B}. The Morse-Darboux lemma is an important tool in topological hydrodynamics, see \cite{I}, and theory of integrable systems, see \cite{D}.

We expect that the result of the present paper will also be useful in 2D fluid dynamics. In particular, it gives a partial answer to Problem $5.6$  from \cite{Iz} on the asymptotical properties of measures on Reeb graphs.

This paper is organised as follows. In Section $2$ we formulate Theorem~$1'$ which is equivalent to Theorem~$1.$ The proof of Theorem~$1'$ is given in Section~$4$. Section~$3$ contains several lemmas useful for the proof of Theorem~$1'.$ 

\section{Reformulation of the main theorem}
 
\begin{thm}$\boldsymbol{1'.}$
Let $\omega=\omega(x,y)dx\wedge dy$ be an area form on $\mathbb R^2,$ and $f=f(x,y)$ be a smooth function such that $f_{x}(0,0)=0$, $f_{y}(0,0)>0$ and $f_{xx}(0,0)>0.$
Then there exists a chart $(p,q)$ centered at $(0,0)$ such that $\omega =dp\wedge dq,$ $f(p,q)=\alpha(p^2+q),$ and $q=0$ if and only if $y=0$. The function $\alpha$ of one variable is smooth in the neighborhood of the origin $0 \in \mathbb R$ and $\alpha'(0) > 0.$ 
\end{thm}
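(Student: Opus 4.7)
My plan is to carry out the argument in two stages: first reduce $f$ to a polynomial normal form adapted to the boundary, then perform a secondary change of coordinates inside the level-set structure of $f$ to make $\omega$ canonical.

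\textbf{Stage 1 (normal form for $f$).} Because $f_{x}(0,0)=0$ and $f_{xx}(0,0)>0$, for each $y$ close to $0$ the slice $x\mapsto f(x,y)$ has a non-degenerate minimum at a smooth point $x_{0}(y)$. A parametric Morse lemma then yields new smooth coordinates $(\tilde P,\tilde Q)$, centered at the origin and still satisfying $\{y=0\}=\{\tilde Q=0\}$ and $\tilde Q\ge 0$, in which $f=\tilde P^{2}+\tilde Q+f(0,0)$; the hypothesis $f_{y}(0,0)>0$ is what allows the slice-minimum value $y\mapsto f(x_{0}(y),y)$ to serve as the coordinate $\tilde Q$. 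After translating $f$, I may assume $f(0,0)=0$, and $\omega$ in these coordinates takes the form $\omega=g(\tilde P,\tilde Q)\,d\tilde P\wedge d\tilde Q$ with $g(0,0)>0$.

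\textbf{Stage 2 (symplectic reduction).} I look for the desired chart in the form $q=\beta(f)-p^{2}$, where $\beta=\alpha^{-1}$ is a smooth function to be determined. Since $dp\wedge dq=\beta'(f)\,dp\wedge df$, passing to the intermediate chart $(\tilde P,f)$, in which $\omega=g(\tilde P,f-\tilde P^{2})\,d\tilde P\wedge df$, reduces the identity $\omega=dp\wedge dq$ to the first-order ODE
\[
\beta'(f)\,\partial p/\partial\tilde P=g(\tilde P,f-\tilde P^{2})
\]
along each level set of $f$ (treating $f$ as a parameter). The boundary requirement $q=0\Leftrightarrow\tilde Q=0$ forces $p(\pm\sqrt{f},f)=\pm\sqrt{\beta(f)}$; integrating the ODE across the parabola $\{f=c\}$ and matching the two endpoint values produces the single compatibility equation
\[
\tfrac{4}{3}\,\beta(f)^{3/2}=A(f),\qquad A(f):=\iint_{\{\tilde P^{2}+\tilde Q\le f,\ \tilde Q\ge 0\}}\omega.
\]
This formula determines $\beta$ (hence $\alpha$); the function $p$ is then obtained by integrating the ODE from the left boundary point, and $q:=\beta(f)-p^{2}$ completes the chart.

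\textbf{Main obstacle.} The decisive technical point will be smoothness at the origin. Since $A(f)$ vanishes like $f^{3/2}$, the formula $\beta=(3A/4)^{2/3}$ is \emph{a priori} only $C^{0}$ at $f=0$. The key lemma I will need is that $A(f)/f^{3/2}$ extends to a smooth \emph{positive} function of $f$; I plan to prove this by the rescaling $\tilde P=\sqrt{f}\,u$, $\tilde Q=fv$, which converts $A(f)$ into $f^{3/2}$ times the integral of $g(\sqrt{f}\,u,fv)$ over the \emph{fixed} region $\{u^{2}+v\le 1,\,v\ge 0\}$; the symmetry $u\mapsto -u$ of this region kills all half-integer powers of $\sqrt{f}$ arising from the Taylor expansion of $g$, yielding smoothness in $f$ together with $\beta'(0)=g(0,0)^{2/3}>0$, whence $\alpha'(0)>0$. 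A parallel rescaling along each characteristic parabola should give smoothness of $p$ at the origin. Once smoothness is secured, verifying that $(p,q)$ is a valid chart and that $q\ge 0$ on its image is routine.
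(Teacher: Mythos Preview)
Your plan is essentially the paper's proof: the same Stage-1 Morse reduction (the paper's Lemma~1), the same area invariant $A$, the same formula $\alpha^{-1}=(3A/4)^{2/3}$, and the same ansatz $q=\beta(f)-p^{2}$ with $p$ a rescaled Hamiltonian time along the level curves. Your rescaling argument for the smoothness of $\beta$ is exactly the parity argument the paper gives as Lemma~2 (phrased there as: $A$ is smooth and odd of order three in $\delta=\sqrt{\varepsilon}$).

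The one substantive divergence is how $p$ is produced, and this is where your outline has a gap. You integrate the ODE from the left boundary point $(-\sqrt{f},0)$; the paper instead integrates from a \emph{bisector} --- the locus of Hamiltonian-time midpoints between the two boundary intersections of each level curve --- after proving separately (Lemma~3, via the same even-in-$\sqrt{f}$ trick) that this bisector extends to a smooth curve transverse to $\{y=0\}$. Your choice has the advantage that $q=0\Leftrightarrow y=0$ is built in, whereas the paper must check it a~posteriori (Lemma~6). The cost is precisely the point you flag: your formula
\[
p(\tilde P,f)=-\sqrt{\beta(f)}+\frac{1}{\beta'(f)}\int_{-\sqrt{f}}^{\tilde P}g(s,f-s^{2})\,ds
\]
is not even defined for $f<0$ (the level curve misses the boundary), and for $f\ge 0$ it is a sum of two individually non-smooth pieces. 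A ``parallel rescaling'' will not fix this directly, since $\tilde P$ is an independent variable and cannot be tied to $\sqrt f$. What does work is to average your formula with its right-endpoint analogue: the $\pm\sqrt f$ contributions then combine into an expression even in $\sqrt f$, hence smooth in $f$ and extendable across $f=0$. But the curve $\{p=0\}$ for that averaged $p$ is exactly the bisector, so you recover the paper's Lemma~3.
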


\begin{Prep}
Theorem~$1$ follows from Theorem~$1'.$
\end{Prep}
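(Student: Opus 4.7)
My plan is to translate Theorem~$1$'s intrinsic hypotheses into the coordinate form required by Theorem~$1'$, remove the sign ambiguities by preliminary reflections $f\mapsto -f$ and $y\mapsto -y$, and then apply Theorem~$1'$ once directly (giving $S=q+p^2$) and once to a reflected auxiliary function (giving $S=q-p^2$). First, I choose local coordinates $(x,y)$ near $O$ so that $\partial M=\{y=0\}$ and $M=\{y\geq 0\}$, and I write $\omega=g(x,y)\,dx\wedge dy$. Replacing $x$ by $-x$ if needed makes $g(0,0)>0$, and I extend $f$ and $g$ smoothly to a neighborhood of the origin in $\mathbb R^2$, keeping $g$ positive there. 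In these coordinates the hypotheses of Theorem~$1$ become $f_x(0,0)=0$ (critical on the boundary), $f_{xx}(0,0)\neq 0$ (nondegenerate there), and $f_y(0,0)\neq 0$ (regular at $O$). Since Theorem~$1$ only requires $\alpha'(0)\neq 0$ with no prescribed sign, replacing $f$ by $-f$ if necessary allows me to assume $f_y(0,0)>0$; this flip leaves the shape of the conclusion intact.

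If $f_{xx}(0,0)>0$, Theorem~$1'$ applies verbatim and yields a chart $(p,q)$ with $\omega=dp\wedge dq$, $f=\alpha(p^2+q)$, $\alpha'(0)>0$, and $q=0\iff y=0$. To see that $q\geq 0$ on $M$, I differentiate $f=\alpha(p^2+q)$ in $y$ at the origin (using $p(0,0)=0$) to get $q_y(0,0)=f_y(0,0)/\alpha'(0)>0$; writing $q(x,y)=y\,\psi(x,y)$ then forces $\psi(0,0)>0$, so $q$ and $y$ have the same sign near $O$. This is the normal form with $S=q+p^2$.

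If instead $f_{xx}(0,0)<0$, I apply a reflection trick. Set $\tilde x=x$, $\tilde y=-y$, $\tilde F(\tilde x,\tilde y):=-f(\tilde x,-\tilde y)$, and $\tilde\omega:=-\omega$. A direct check shows $\tilde\omega=g(\tilde x,-\tilde y)\,d\tilde x\wedge d\tilde y$ has positive density and that $\tilde F_{\tilde x}(0,0)=0$, $\tilde F_{\tilde y}(0,0)=f_y(0,0)>0$, $\tilde F_{\tilde x\tilde x}(0,0)=-f_{xx}(0,0)>0$. Theorem~$1'$ applied to $(\tilde\omega,\tilde F)$ then produces a chart $(\tilde p,\tilde q)$ with $\tilde\omega=d\tilde p\wedge d\tilde q$, $\tilde F=\tilde\alpha(\tilde p^2+\tilde q)$, $\tilde\alpha'(0)>0$, $\tilde q=0\iff\tilde y=0$, and (by the same sign argument as in the previous case) $\tilde q$ and $\tilde y$ of the same sign near the origin. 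Defining $p:=\tilde p$ and $q:=-\tilde q$, one verifies $dp\wedge dq=-d\tilde p\wedge d\tilde q=-\tilde\omega=\omega$; that $q=0\iff y=0$; that $q\geq 0$ on $M$ (because $\tilde q$ has the sign of $\tilde y=-y$); and that $f=-\tilde\alpha(p^2-q)=\alpha(q-p^2)$ with $\alpha(s):=-\tilde\alpha(-s)$ satisfying $\alpha'(0)=\tilde\alpha'(0)>0\neq 0$. This is the normal form with $S=q-p^2$.

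The only real subtlety, and the point that will need the most care, is the orientation bookkeeping in the reflected case: three simultaneous sign changes are introduced (of $y$, of $f$, and of $\omega$), and one must check that pulling the conclusion of Theorem~$1'$ back through $q\mapsto-\tilde q$ restores the correct area form, the correct sign of $q$ on $M$, and the nonvanishing of $\alpha'(0)$ demanded by Theorem~$1$.
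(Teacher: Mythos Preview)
Your argument is correct and follows essentially the same route as the paper: reduce to the hypotheses of Theorem~$1'$ by the sign flips $f\mapsto -f$ and $y\mapsto -y$, apply Theorem~$1'$, and then adjust $(p,q)$ so that $q\ge 0$ on $M$. The only organisational difference is that the paper performs both flips up front (so it always lands in the $f_{xx}>0$ case) and handles the sign of $q$ by a final substitution $(p,q)\to(-p,-q)$, whereas you fix $f_y>0$ first, split on the sign of $f_{xx}$, and verify $q\ge 0$ directly via $q_y(0,0)>0$; your version is more explicit about the sign-tracking but the content is the same.
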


\begin{proof}
Let us choose a chart $(x,y)$ centered at $O$ in $\partial M$ such that $P \in \partial M$ if and only if $y(P)=0.$ The function $f(x,y)$ and the form $\omega(x,y)dx\wedge dy$ 
can be smoothly extended on some neighborhood of $(0,0)$. As $(0,0)$ is non-degenerate critical point for $f|_{\partial M}$ we have $f_{x}(0,0)=0$, $f_{y}(0,0)\ne0$, $f_{xx}(0,0)\ne0.$ 
To fulfil conditions $f_{y}(0,0)>0$, $f_{xx}(0,0)>0,$ we may need some of the following transformations:
$f \rightarrow -f,$ $y \rightarrow -y.$   
Now, we obtain the chart $(p,q)$ from Theorem $1'.$ If $q \leq 0$ we need one more transformation: $q \rightarrow -q, p \rightarrow -p.$
It remains to resctrict the chart $(p,q)$ to the upper half plane. 
\end{proof}

\section{Necessary lemmas}

In this section we assume that conditions of Theorem~$1'$ hold. Also from now on we will assume that $f(0,0)=0.$ This will simplify notation.

First of all, we want to prove an analog of the classical Morse Lemma for a surface with boundary.

\begin{Lemma}
There exists a chart $(\hat{x},\hat{y})$ centered at $(0,0)$ such that
\begin{enumerate}
\item 
 $\hat{x}(0,0)=\hat{y}(0,0)=0$;
\item 
 $f(\hat{x},\hat{y})=\hat{x}^2+\hat{y}$;
\item 
 $\hat{y}(x,y)=0$ if and only if $y=0.$
\end{enumerate}
\end{Lemma}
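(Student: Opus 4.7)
The plan is to adapt the classical Morse lemma on the line to the boundary curve first, and then extend transversally using Hadamard's division trick so that the boundary equation $y=0$ is preserved automatically.

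First, I would restrict $f$ to the boundary. The function $x \mapsto f(x,0)$ is smooth, vanishes at $0$ (since we normalized $f(0,0)=0$), has $f_x(0,0)=0$, and satisfies $f_{xx}(0,0)>0$. By Hadamard's lemma applied twice, $f(x,0) = x^{2}\, a(x)$ with $a$ smooth and $a(0) = f_{xx}(0,0)/2 > 0$, so I can define $\tilde{x}(x) := x\sqrt{a(x)}$. This is smooth near $0$ with $\tilde{x}'(0) = \sqrt{a(0)} \ne 0$, hence a local diffeomorphism of $\mathbb{R}$, and $f(x,0)=\tilde{x}(x)^{2}$.

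Next I would apply Hadamard's lemma in the $y$-direction to the function $f(x,y)-f(x,0)$, which vanishes on $\{y=0\}$. This yields a smooth function $h(x,y)$ with
\begin{equation*}
f(x,y)=f(x,0)+y\,h(x,y)=\tilde{x}(x)^{2}+y\,h(x,y),
\end{equation*}
and $h(0,0)=f_{y}(0,0)>0$ by hypothesis. Now set $\hat{x}:=\tilde{x}(x)$ and $\hat{y}:=y\,h(x,y)$; then by construction $f(\hat{x},\hat{y})=\hat{x}^{2}+\hat{y}$.

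It remains to verify that $(\hat{x},\hat{y})$ is a legitimate chart and that condition (3) holds. The Jacobian of $(x,y)\mapsto(\hat{x},\hat{y})$ at the origin is lower triangular with diagonal entries $\tilde{x}'(0)$ and $h(0,0)$, both nonzero, so the map is a local diffeomorphism. For the boundary condition, since $h$ is continuous and $h(0,0)>0$, $h$ stays positive on a neighborhood of the origin; on that neighborhood $\hat{y}=y\,h(x,y)=0$ iff $y=0$, as required. Shrinking the chart if necessary to stay inside this neighborhood completes the construction. There is no real obstacle here — the only thing to be careful about is that the Hadamard factor $h$ remains nonvanishing, which is guaranteed by the sign hypothesis $f_{y}(0,0)>0$ and is exactly what ensures $\hat{y}$ can serve as a new transverse coordinate preserving the boundary.
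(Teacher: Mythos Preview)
Your argument is correct and follows essentially the same route as the paper: both proofs use Hadamard's lemma to write $f$ as a square in the first variable plus $y$ times a nonvanishing factor, and then take that factor times $y$ as the new transverse coordinate so that the boundary $\{y=0\}$ is automatically preserved. The only cosmetic difference is that you first normalize on the boundary (so your $\hat{x}$ depends on $x$ alone), whereas the paper expands $f$ globally and obtains an $\hat{x}$ depending on both variables; either way the Jacobian check and the boundary condition go through identically.
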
 

\begin{proof}
Hadamard's lemma implies that $$f(x,y)=f_1(x,y)x+f_2(x,y)y,$$ where $f_1$ and $f_2$ are smooth functions, and $f_1(0,0)=f_{x}(0,0)$, $f_2(0,0)=f_{y}(0,0).$
Since  $f_{x}(0,0)=0$ Hadamard's lemma similarly implies that

\begin{equation*}
    \begin{split}
 f(x,y)=(f_{11}(x,y)x+f_{12}(x,y)y)x+f_2(x,y)y\\
 =f_{11}(x,y)x^2+f_{12}(x,y)xy+f_2(x,y)y\\
 =(x\sqrt{f_{11}(x,y)})^2+y(f_{12}(x,y)x+f_2(x,y)).
    \end{split}
\end{equation*}
Recall that $f_{xx}(0,0)>0$ and also notice that $f_{11}(0,0)=\frac{1}{2}f_{xx}(0,0)$. Consider the following transformation of coordinates
\begin{align*}
    \hat{x}(x,y)&=\sqrt{f_{11}(x,y)}x\\
    \hat{y}(x,y)&=y(f_{12}(x,y)x+f_2(x,y)).
\end{align*}

The Jacobian determinant of this transformation at the point $(0,0)$ is equal to $\sqrt{f_{11}(0,0)}f_2(0,0)>0.$ It follows from the inverse function theorem that functions $\hat{x}$ and $\hat{y}$
form a chart centered at $(0,0).$  By construction
$$f(\hat{x},\hat{y})=\hat{x}^2+\hat{y},$$ and $\hat{y}(x,y)=y(f_{12}(x,y)x+f_2(x,y))=0$ if and only if $y=0.$ 
\end{proof}

\begin{Rem}
	It follows from Lemma~$1$ that without loss of generality it can be assumed in Theorem~$1'$ that in the chart $(x,y)$  we have $f(x,y) = x^2+y.$ So from now on we will forget about the chart~$(\hat{x},\hat{y}).$
\end{Rem}

\begin{Con}
Let $$D(f,\varepsilon) := \{ (x,y)\in \mathbb R^2 \mid f(x,y)\leq \varepsilon \mbox{ and } y\geq 0 \}. $$ 

Than the function $$A_{f}(\varepsilon) := \int\limits_{D(f,\varepsilon)}\omega(x,y)dx\wedge dy$$ is well-defined if  $\varepsilon\geqslant0$ is small enough (to use Lemma 1). Using the chart $(x,y)$  the function $A_f(\varepsilon)$ can be expressed as 
$$A_f(\varepsilon)=\int\limits_{-\sqrt{\varepsilon}}^{\sqrt{\varepsilon}}dx\int\limits_0^{\varepsilon-x^2}\omega(x,y)\\dy.$$
\end{Con}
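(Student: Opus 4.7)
The plan is to use Remark 1 together with elementary Fubini to reduce the claim to an explicit calculation. By Remark 1 we may assume that in the chart $(x,y)$ we have $f(x,y)=x^2+y$ on some neighborhood $U$ of $(0,0)$, and that the form $\omega(x,y)\,dx\wedge dy$ is smooth on $U$. So first I would simply rewrite the set
\[
D(f,\varepsilon)=\{(x,y) : x^2+y\le \varepsilon,\ y\ge 0\}
=\{(x,y) : -\sqrt{\varepsilon}\le x\le\sqrt{\varepsilon},\ 0\le y\le \varepsilon-x^2\}.
\]
This description already shows $D(f,\varepsilon)$ is compact. Choosing $\varepsilon_0>0$ small enough that $D(f,\varepsilon_0)\subset U$ (which is possible because $D(f,\varepsilon)$ shrinks to the single point $(0,0)$ as $\varepsilon\to 0^+$, and $U$ is a neighborhood of that point), we guarantee that for every $0\le\varepsilon\le\varepsilon_0$ the density $\omega(x,y)$ is smooth, hence bounded, on $D(f,\varepsilon)$. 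Therefore the area integral defining $A_f(\varepsilon)$ is a finite integral of a bounded continuous function over a compact measurable set, so $A_f(\varepsilon)$ is well defined.

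For the explicit formula, I would note that $D(f,\varepsilon)$ as written above is precisely the region of type I in the variables $(x,y)$, namely $x\in[-\sqrt\varepsilon,\sqrt\varepsilon]$ with $y$-slice $[0,\varepsilon-x^2]$. Applying Fubini's theorem to the continuous integrand $\omega(x,y)$ on this compact set yields
\[
A_f(\varepsilon)=\int_{D(f,\varepsilon)}\omega(x,y)\,dx\wedge dy
=\int_{-\sqrt\varepsilon}^{\sqrt\varepsilon}\!dx\int_{0}^{\varepsilon-x^2}\omega(x,y)\,dy,
\]
which is the desired identity.

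There is essentially no analytic difficulty here; the only substantive point is the appeal to Remark 1, which hides the genuine work (Lemma 1) needed to reduce $f$ to the normal form $x^2+y$. Once that reduction is in hand, the corollary is just a compactness-plus-Fubini remark, and the main thing to be careful about is stating that the formula holds for $\varepsilon\in[0,\varepsilon_0]$ with $\varepsilon_0$ small enough that $D(f,\varepsilon)$ fits inside the chart neighborhood given by Lemma 1.
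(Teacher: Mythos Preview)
Your proposal is correct and matches the paper's intent: the paper does not give an explicit proof of this corollary at all, treating it as an immediate consequence of Lemma~1 (via Remark~1). Your argument---normal form $f=x^2+y$, compactness of $D(f,\varepsilon)$ for small $\varepsilon$, and Fubini---is exactly the obvious unpacking of that implicit step, so there is nothing to compare.
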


\begin{Rem}
	The function $A_f$ gives us an invariant of a pair $(f,\omega).$ It will play a crucial role in the proof of Theorem~$1'.$
\end{Rem}                                                                   

\begin{Ex}
Consider the upper half-plane $H$ with an area form $\omega = dp \wedge dq$ and  a function $f = \alpha(p^2 + q),$ where $\alpha'(0)>0.$ Then the function $A_{f}$ can be expressed as 
\begin{multline*} 
A_{f}(\alpha(\varepsilon))= \int\limits_{D(\alpha(p^2+q),\alpha(\varepsilon))}dp\wedge dq \\
= \int\limits_{D(p^2+q,\varepsilon)}dp\wedge dq  = \int\limits_{-\sqrt{\varepsilon}}^{\sqrt{\varepsilon}}dp\int\limits_0^{\varepsilon-p^2}dq=\int\limits_{-\sqrt{\varepsilon}}^{\sqrt{\varepsilon}}(\varepsilon-p^2)dx=\frac{4}{3}\varepsilon\sqrt{\varepsilon}=\frac{4}{3}\varepsilon^{3/2},
\end{multline*}
so $$[A_f\circ \alpha](\varepsilon)=4/3\varepsilon^{3/2}$$
and $$\alpha(\varepsilon)=A_f^{-1}(4/3\varepsilon^{3/2})$$
or $$\alpha^{-1}(\varepsilon)=[3/4A_f(\varepsilon)]^{2/3}.$$
So we know how to determine the function $\alpha$ from Theorem~$1'.$ Now we want to prove that $\alpha$ is a smooth function.
\end{Ex}

\begin{Lemma}
The function $\tilde A (\varepsilon):= A_f(\varepsilon)^{2/3}$ 
is smooth in some neighborhood of zero.
\end{Lemma}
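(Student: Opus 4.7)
The plan is to rescale the region $D(f,\varepsilon)$ to a fixed compact domain, extracting the factor $\varepsilon^{3/2}$ from $A_f$ explicitly so that the remaining factor becomes a smooth function of $\varepsilon$ with a nonzero value at the origin. Raising to the $2/3$ power then produces a smooth function, and equals $\widetilde{A}$.

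Using the chart from Lemma~1 I may assume $f(x,y)=x^2+y$ and apply the substitution $x=\sqrt{\varepsilon}\,u$, $y=\varepsilon(1-u^2)\,v$, which maps $[-1,1]\times[0,1]$ bijectively onto $D(f,\varepsilon)$ with Jacobian $\varepsilon^{3/2}(1-u^2)$. This gives
\begin{equation*}
A_f(\varepsilon) \;=\; \varepsilon^{3/2}\, H(\sqrt{\varepsilon},\varepsilon), \qquad H(s,t) \;:=\; \int_{-1}^{1}\!\int_{0}^{1}(1-u^2)\,\omega\bigl(su,\,t(1-u^2)v\bigr)\, dv\, du,
\end{equation*}
and $H$ is manifestly smooth in $(s,t)$. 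The crucial point is that $H$ is even in $s$: the change of variable $u\mapsto -u$ preserves $(1-u^2)$ and the $y$-argument of $\omega$ while sending $su\mapsto -su$. Invoking the parametric version of the classical Whitney lemma on smooth even functions, I obtain a smooth function $\widetilde{H}(\sigma,t)$ with $H(s,t)=\widetilde{H}(s^2,t)$, so $G(\varepsilon):=\widetilde{H}(\varepsilon,\varepsilon)$ is smooth in a neighborhood of zero.

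Now $A_f(\varepsilon)=\varepsilon^{3/2}\, G(\varepsilon)$ and
\begin{equation*}
G(0) \;=\; H(0,0) \;=\; \omega(0,0)\int_{-1}^{1}(1-u^2)\,du \;=\; \tfrac{4}{3}\,\omega(0,0),
\end{equation*}
which is strictly positive under the standard orientation convention $\omega(0,0)>0$. Hence $G(\varepsilon)^{2/3}$ is smooth in a neighborhood of zero, and therefore
\begin{equation*}
\widetilde{A}(\varepsilon) \;=\; A_f(\varepsilon)^{2/3} \;=\; \varepsilon\, G(\varepsilon)^{2/3}
\end{equation*}
is smooth near $\varepsilon=0$, as claimed.

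The only step that is not entirely routine is the passage from evenness of $H$ in $s$ to the \emph{joint} smoothness of $\widetilde{H}$ in $(\sigma,t)$, rather than mere smoothness in $\sigma=s^2$ for each fixed $t$. I would handle this either by citing the parametric Whitney lemma, or — if an elementary treatment is preferred — by Taylor-expanding $\omega(x,y)$ in $x$ inside the definition of $H$: the odd powers of $u$ integrate to zero against $(1-u^2)$ on $[-1,1]$, giving a Taylor expansion of $H$ in $s^2$ whose remainder of any finite order is smooth jointly in $(s^2,t)$, from which the existence of a smooth $\widetilde{H}$ follows by a Borel-type argument.
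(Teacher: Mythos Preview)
Your proof is correct and follows the same strategy as the paper: both factor $A_f(\varepsilon)=\varepsilon^{3/2}G(\varepsilon)$ with $G$ smooth and $G(0)=\tfrac{4}{3}\omega(0,0)>0$, the only difference being that you rescale the domain of integration to $[-1,1]\times[0,1]$ whereas the paper sets $\delta=\sqrt{\varepsilon}$ and observes that $A_f(\delta)=\int_{-\delta}^{\delta}u(x,\delta^2)\,dx$ is a smooth odd function of $\delta$ vanishing to third order. One simplification: since in your application $t=s^2$ throughout, you can bypass the parametric Whitney lemma by applying the ordinary one-variable version to $K(s):=H(s,s^2)$, which is smooth and even in $s$ and coincides with the paper's function $B(\delta)$.
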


\begin{proof}
Let $$u(x,\varepsilon) := \int\limits_0^{\varepsilon-x^2}\omega(x,y)\\dy.$$
Note that $u$ is a smooth function of two variables. Further,
$$A_f(\varepsilon) = \int\limits_{-\sqrt{\varepsilon}}^{\sqrt{\varepsilon}}u(x,\varepsilon)dx.$$ 
Introducing a new variable $\delta=\sqrt{\varepsilon}$ we obtain
$$A_f(\delta)=\int\limits_{-\delta}^{\delta}u(x,\delta^2)dx.$$
This function is smooth and odd. Let us find the third order Taylor polynomial of $A_f(\delta):$ 
\begin{multline*}
A_f(\delta) = \int\limits_{-\delta}^{\delta}dx\int\limits_0^{\delta^2-x^2}\omega(x,y)dy = \int\limits_{-\delta}^{\delta}dx\int\limits_0^{\delta^2-x^2}[\omega(0,0)+O(x)+O(y)]dy\\ 
= \omega(0,0)  \int\limits_{-\delta}^{\delta}dx\int\limits_0^{\delta^2-x^2}dy +\int\limits_{-\delta}^{\delta}dx \int\limits_0^{\delta^2-x^2}O(x)dy + \int\limits_{-\delta}^{\delta}dx\int\limits_0^{\delta^2-x^2}O(y)dy\\
= \omega(0,0)\frac{4}{3}(\delta^2)^{3/2} + O(\int\limits_{-\delta}^{\delta}dx \int\limits_0^{\delta^2-x^2}x dy)+O(\int\limits_{-\delta}^{\delta}dx \int\limits_0^{\delta^2-x^2}y dy)\\
= \omega(0,0)\frac{4}{3}\delta^3 + O(\delta^4)+O(\delta^4)=  \omega(0,0)\frac{4}{3}\delta^3 +O(\delta^4).
\end{multline*}

It means, that $A_f(\delta)=\delta^3B(\delta)$, where the function $B(\delta)$ is smooth, even, and $B(0)\ne0$.
So, $A_f(\varepsilon)=\varepsilon^{3/2}B(\sqrt{\varepsilon})$ and $\tilde A (\varepsilon) = \varepsilon [B(\sqrt{\varepsilon})]^{2/3}.$
\end{proof}

\begin{Rem}
The function $\tilde A$ is defined only if $\varepsilon\geqslant0$. But it extends to a smooth function on a neighborhood of zero.
\end{Rem}

Further in this section we will try to do things in the same way as in the proof of the classical Darboux Lemma (see \cite{Ar}, p. 230).

\begin{Def}
	Recall that one-forms on a surface $M$ with a fixed area form $\omega$ may be identified with vector fields, and every smooth function 
	\mbox{$f: M \to \mathbb R$} determines a unique vector field $X_f,$ called the \textit{Hamiltonian vector field} with the \textit{Hamiltonian} $f,$ by requiring that for every
	vector field $Y$ on $M$ the identity $df(Y)=\omega(Y,X_f)$ holds. 
	Let also $P_f$ be the flow (\textit{hamiltonian flow}) corresponding to the vector field  $X_f.$ 
\end{Def}

\begin{Def}
	Recall that in the chart $(x,y)$ we have $f(x,y)=x^2+y$ (see Remark~$1$). 
	Let $t_f(\varepsilon)$ be the time necessary to go from $(-\sqrt{\varepsilon},0)$ to the point $(\sqrt{\varepsilon},0)$ under the action of $P_f,$ i.e. $t_f(\varepsilon)$ is defined by
	$$P_f^{t_f(\varepsilon)}(-\sqrt{\varepsilon},0)=(\sqrt{\varepsilon},0).$$
\end{Def}

\begin{Def}
The curve $$\gamma(\varepsilon) := P_f^{\frac{1}{2}t_f(\varepsilon)}(-\sqrt{\varepsilon},0)$$ where $\varepsilon\geqslant0$ is called a \textit{bisector}.
\end{Def}

\begin{Lemma}
The bisector is smooth and transversal to the boundary $\{y=0\}.$
\end{Lemma}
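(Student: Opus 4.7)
The plan is: use the normal form $f=x^2+y$ to describe the orbits as parabolic arcs, express the transit time as a smooth integral, extract smoothness of $\gamma(\varepsilon)$ from an even-symmetry argument under $\delta\mapsto-\delta$ (where $\delta=\sqrt\varepsilon$), and read off transversality from the parabolic constraint.

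Since in the chart $(x,y)$ we have $f=x^2+y$ (Remark~1), the intersection of the level set $\{f=\varepsilon\}$ with the upper half-plane is the arc $y=\varepsilon-x^2$, $x\in[-\sqrt\varepsilon,\sqrt\varepsilon]$. The Hamiltonian field $X_f$ is tangent to this arc, and from $df(Y)=\omega(Y,X_f)$ a direct computation shows that along the arc $\dot x$ is a nowhere-vanishing smooth function. Writing $\delta=\sqrt\varepsilon$ and substituting $x=\delta u$, the transit time from $(-\delta,0)$ to $(\delta,0)$ reads (up to sign)
$$t_f(\delta^2)=\int_{-\delta}^{\delta}\omega(x,\delta^2-x^2)\,dx=\delta\int_{-1}^{1}\omega(\delta u,\delta^2(1-u^2))\,du,$$
which is visibly smooth and odd in $\delta$. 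Setting $\tau(\delta):=t_f(\delta^2)/2$ and $\Gamma(\delta):=P_f^{\tau(\delta)}(-\delta,0)$, the composition of the smooth flow, smooth initial point, and smooth time makes $\Gamma$ smooth in $\delta$ on a neighborhood of $0$.

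The crucial step is to show that $\Gamma$ is \emph{even} in $\delta$. Since $\tau(-\delta)=-\tau(\delta)$ and $-(-\delta)=\delta$, we have $\Gamma(-\delta)=P_f^{-\tau(\delta)}(\delta,0)$; but the $X_f$-orbit from $(-\delta,0)$ reaches $(\delta,0)$ in total time $2\tau(\delta)$, so its midpoint $\Gamma(\delta)=P_f^{\tau(\delta)}(-\delta,0)$ is equally obtained from $(\delta,0)$ by flowing backward for time $\tau(\delta)$. Hence $\Gamma(-\delta)=\Gamma(\delta)$. Any smooth even function of $\delta$ factors through $\delta^2$, so there is a smooth $\tilde\Gamma$ with $\Gamma(\delta)=\tilde\Gamma(\delta^2)$, and by construction $\gamma(\varepsilon)=\tilde\Gamma(\varepsilon)$ extends smoothly to a neighborhood of $\varepsilon=0$.

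For transversality, write $\tilde\Gamma(\varepsilon)=(X(\varepsilon),Y(\varepsilon))$. Every $\Gamma(\delta)$ sits on the parabola $y=\delta^2-x^2$, so $Y(\varepsilon)=\varepsilon-X(\varepsilon)^2$, and $\tilde\Gamma(0)=(0,0)$ forces $X(0)=0$. Differentiating at $\varepsilon=0$ gives $Y'(0)=1-2X(0)X'(0)=1\neq 0$, so the tangent vector $\gamma'(0)=(X'(0),1)$ is linearly independent from the boundary tangent $(1,0)$, which is transversality at the origin. The main obstacle in this plan is precisely the even-symmetry argument: without it one would only obtain smoothness of $\gamma$ as a function of $\sqrt\varepsilon$, which is insufficient for $\gamma$ to define a smooth curve through the origin in the $(x,y)$-plane.
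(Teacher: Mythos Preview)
Your proof is correct and rests on the same core idea as the paper's: pass to the variable $\delta=\sqrt{\varepsilon}$, show the bisector is a smooth \emph{even} function of $\delta$, and invoke the fact that smooth even functions factor through $\delta^2$. The executions differ slightly. The paper changes to coordinates $(x,z)$ with $z=x^2+y$, so that level curves become horizontal; there the bisector is the graph $(s(z),z)$, and $s$ is shown to be smooth via the implicit function theorem applied to the half-time integral equation, with evenness of $\hat s(w)=s(w^2)$ obtained by direct manipulation of that integral identity. You instead stay in $(x,y)$ and argue dynamically: smoothness of $\Gamma(\delta)$ comes for free from the smooth dependence of the flow on time and initial data, and evenness follows from the group law of the flow (the midpoint is reached from either endpoint in time $\pm\tau(\delta)$). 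Your transversality argument, differentiating the constraint $Y(\varepsilon)=\varepsilon-X(\varepsilon)^2$ to get $Y'(0)=1$, is equivalent to the paper's observation that the curve $(s(z),z)$ is transverse to the parabola $z=x^2$ at the origin. Both routes are equally short; yours avoids the implicit function theorem at the cost of invoking smooth dependence of flows, which is a fair trade.
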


\begin{figure}
\centering
\subfigure[Chart $(x,y).$]{
	\begin{tikzpicture}[scale=2.5]
	\filldraw (0,0) circle (0.2pt);
	\clip (-1,-0.4) rectangle (1.2,1.2);
	\draw[->, thick] (-1,0) -- (1,0) node[right] {$x$};
	\draw[->] (0,-1) -- (0,1) node[above] {$y$};
	\foreach \x in {-0.4, -0.3, -0.2, -0.1, 0.0, 0.1, 0.2, 0.3, 0.4, 0.5, 0.6, 0.7, 0.8, 0.9}
		\draw  (-1,\x-1) parabola bend (0,\x) (1,\x-1);
	\end{tikzpicture}}
\subfigure[Chart $(x,z).$]{
	\begin{tikzpicture}[scale=2.5]
	\filldraw (0,0) circle (0.2pt);
	\clip (-1,-0.4) rectangle (1.2,1.2);
	\draw[thick] (-1,1) parabola bend (0,0) (1,1);
	\foreach \x in {-0.4, -0.3, -0.2, -0.1, 0.1, 0.2, 0.3, 0.4, 0.5, 0.6, 0.7, 0.8, 0.9}
		\draw (-1,\x) -- (1,\x);
	\draw[->] (-1,0) -- (1,0) node[right] {$x$};
	\draw[->] (0,-1) -- (0,1) node[above] {$z$};
	\end{tikzpicture}}
\caption{Level sets of the function $f$ in charts $(x,y)$ and $(x,z)$. The thick curve is the boundary of $M.$}
\label{fig2}
\end{figure}
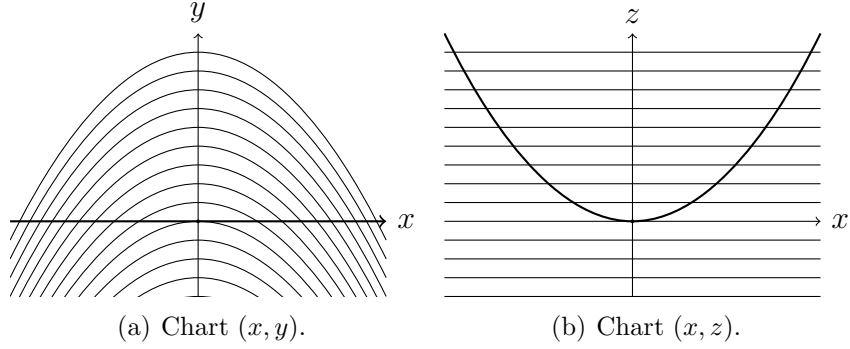

\begin{proof}
Let us introduce a new coordinate system $(x,z),$ where $$z(x,y):= f(x,y) = x^2+y$$ (see Figure \ref{fig2}).
Then in these new coordinates $f(x,z)=z$, $\omega = \omega(x,z)dx\wedge dz$, $y=0$ if and only if $z=x^2$, and $X_f=(-\frac{1}{\omega(x,z)},0)$.
Let us compute the function $t_f.$ Note that 
\begin{equation}\label{eqn:1}
-\omega(x,z)dx=dt 
\end{equation}
Integrating \eqref{eqn:1}  over the horizontal segment between the points $(-\sqrt{z},z)$ and $(\sqrt{z},z)$, we get 
$$t_f(z)=-\int\limits_{-\sqrt{z}}^{\sqrt{z}}\omega(\tau,z)d\tau.$$
In the same way we obtain equations for the bisector $(s(z),z)$

\begin{equation}\label{eqn:2}
    \int\limits_{-\sqrt{z}}^{s(z)}\omega(\tau,z)d\tau=\frac{1}{2}\int\limits_{-\sqrt{z}}^{\sqrt{z}}\omega(\tau,z)d\tau.
\end{equation}

Introducing a new variable $w=\sqrt{z}$ we obtain an equation for the function $\hat{s}(w):=s(w^2)$:

\begin{equation}\label{eqn:3}
    \int\limits_{-w}^{\hat{s}(w)}\omega(\tau, w^2)d\tau=\frac{1}{2}\int\limits_{-w}^{w}\omega(\tau, w^2)d\tau,
\end{equation}
Equation \eqref{eqn:3} allows us to define $\hat{s}(w)$ even if $w<0.$ We claim that $\hat{s}$ is a smooth function and $\hat{s}(-w) = \hat{s}(w).$ 

Partial derivative of \eqref{eqn:3}  with respect to $\hat{s}$ is $\omega(\hat{s}(w),w^2)$. For any $(x,z)$ we have $\omega(x,z)\ne0.$  
It follows from the implicit function theorem that $\hat{s}(w)$ depends smoothly on $w.$  

Now let us  let us make the following substitution in \eqref{eqn:3}: $w \rightarrow -w.$ We obtain:
\begin{multline*}
\int\limits_{-(-w)}^{\hat{s}(-w)}\omega(\tau, (-w)^2)d\tau=\frac{1}{2}\int\limits_{-(-w)}^{-w}\omega(\tau, (-w)^2)d\tau \\
\iff -\int\limits_{\hat{s}(-w)}^{w}\omega(\tau, w^2)d\tau=-\frac{1}{2}\int\limits_{-w}^{w}\omega(\tau, w^2)d\tau \\
\iff  \int\limits_{\hat{s}(-w)}^{w}\omega(\tau, w^2)d\tau= \frac{1}{2}\int\limits_{-w}^{w}\omega(\tau, w^2)d\tau \\
\iff \int\limits_{-w}^{\hat{s}(-w)}\omega(\tau, w^2)d\tau= \frac{1}{2}\int\limits_{-w}^{w}\omega(\tau, w^2)d\tau \iff \hat{s}(-w)=\hat{s}(w).
\end{multline*}

It means that equation \eqref{eqn:3} defines $\hat{s}$ as an even function of $w$.
	$$s(z)=s(\sqrt{z}^2)=\hat{s}(\sqrt{z}),$$
so $s$ is a smooth function of $z.$ Now it is clear that the bisector is transversal to the boundary $\{z=x^2\}.$
\end{proof}

\begin{Rem}
It follows from the proof of Lemma~$3$ that the bisector can be smoothly extended to the lower half plane.
\end{Rem}

\begin{Def}
Let $T_f(x,y)$ be be the time necessary to go from the bisector to the point $(x,y)$ under the action of $P_f$. 
\end{Def}

\begin{Rem}
In the chart $(x,z)$, we have:
$$T_f(x,z)=\int\limits_{s(z)}^{x}-\omega(\tau,z)d\tau = \int\limits_{x}^{s(z)}\omega(\tau,z)d\tau,$$ 
where the function $s$ is defined in Lemma $3$.
Now it is clear that $T_f$ is a smooth function. 

Also note that since $P_f$ is the flow of the vector field $X_f$, it follows that $dT_f(X_f)=1.$
\end{Rem}

\begin{Lemma}
 	$\omega= df \wedge dT_f.$  
\end{Lemma}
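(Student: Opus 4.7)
The plan is to show $\omega - df \wedge dT_f$ vanishes as a $2$-form by checking that it kills the pair $(X_f, Y)$ for every $Y$, then invoking the fact that this forces a $2$-form on a surface to be zero wherever $X_f$ is nonzero.

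First I would collect the two ingredients we already have. Since $f$ is constant along its own Hamiltonian flow, $df(X_f) = 0$; and since $P_f$ is the flow of $X_f$, the identity $dT_f(X_f) = 1$ is noted in Remark~$4$. From the definition of $X_f$, for any vector field $Y$ we have $\omega(Y, X_f) = df(Y)$, i.e.\ $\omega(X_f, Y) = -df(Y)$.

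Next I would compute the wedge product directly: for any $Y$,
\[
(df \wedge dT_f)(X_f, Y) = df(X_f)\, dT_f(Y) - df(Y)\, dT_f(X_f) = 0 - df(Y) = -df(Y).
\]
Combining this with the previous step yields $\bigl(\omega - df \wedge dT_f\bigr)(X_f, Y) = 0$ for every vector field $Y$.

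Finally, I would observe that $X_f$ is nowhere vanishing on the neighborhood where $T_f$ is defined. Indeed, in the chart $(x,y)$ of Remark~$1$ we have $f = x^2 + y$, so $df = 2x\,dx + dy$ never vanishes, and together with $\omega(x,y) \ne 0$ this forces $X_f \ne 0$. On a $2$-manifold, a $2$-form that annihilates a nonzero vector at every point must itself be zero (pick any $Y$ independent of $X_f$ at each point to see that the form vanishes on a basis). Hence $\omega = df \wedge dT_f$. There is no real obstacle here; the argument is essentially bookkeeping, and the only point that deserves care is verifying that $X_f$ is nowhere vanishing so that the ``kill on $X_f$'' argument actually pins down the full form.
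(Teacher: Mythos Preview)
Your proof is correct and essentially identical to the paper's: both compute the contraction with $X_f$ (the paper writes it as $i_{X_f}$, you evaluate on pairs $(X_f,Y)$), use $df(X_f)=0$ and $dT_f(X_f)=1$ to match the result with $i_{X_f}\omega=-df$, and then invoke $X_f\neq 0$ on a surface to conclude. If anything, you are slightly more explicit than the paper in justifying why $X_f$ is nowhere vanishing.
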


\begin{proof}
	Using that $dT_f(X_f)=1$ (see Remark 4), we get
	$$i_{X_f} df \wedge dT_f = df(X_f)dT_f - df dT_f(X_f)=  -dT_f(X_f) df = - df = i_{X_f} \omega,$$
	so $$i_{X_f} (df \wedge dT_f - \omega) = 0,$$
	and, since the ambient surface is 2-dimensional and $X_f \neq 0$, it follows that $\omega = df \wedge dT_f.$ 
\end{proof}

\begin{Lemma}
	$\frac{d}{d\varepsilon}A_f(\varepsilon)=|t_f(\varepsilon)|.$
\end{Lemma}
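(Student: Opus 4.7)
The plan is to differentiate $A_f(\varepsilon)$ directly from the formula
\[
A_f(\varepsilon) = \int_{-\sqrt{\varepsilon}}^{\sqrt{\varepsilon}} u(x,\varepsilon)\, dx, \qquad u(x,\varepsilon) := \int_0^{\varepsilon - x^2} \omega(x,y)\, dy,
\]
given in Corollary~$1$, and then to identify the result with the expression for $t_f(\varepsilon)$ obtained in the proof of Lemma~$3$.

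The first step is to apply the Leibniz rule for differentiation under the integral sign. The two boundary contributions vanish identically, because $u(\pm\sqrt{\varepsilon},\varepsilon) = 0$: the inner integral reduces to $\int_0^0 \omega\, dy$. Hence for every $\varepsilon > 0$
\[
\frac{d}{d\varepsilon} A_f(\varepsilon) = \int_{-\sqrt{\varepsilon}}^{\sqrt{\varepsilon}} \frac{\partial u}{\partial \varepsilon}(x,\varepsilon)\, dx = \int_{-\sqrt{\varepsilon}}^{\sqrt{\varepsilon}} \omega(x,\, \varepsilon - x^2)\, dx,
\]
by differentiating the inner integral with respect to its upper limit.

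The second step is to recognize this integral as $|t_f(\varepsilon)|$ using the $(x,z)$-chart from the proof of Lemma~$3$. Since $z = x^2 + y$, one has $dx \wedge dz = dx \wedge dy$, so the area form has the same coefficient function in both charts. The formula $t_f(z) = -\int_{-\sqrt{z}}^{\sqrt{z}} \omega(\tau, z)\, d\tau$ established in Lemma~$3$, when read back in the $(x,y)$-chart along the level set $\{z = \varepsilon\} = \{y = \varepsilon - x^2\}$, becomes $t_f(\varepsilon) = -\int_{-\sqrt{\varepsilon}}^{\sqrt{\varepsilon}} \omega(\tau,\, \varepsilon - \tau^2)\, d\tau$. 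This is strictly negative for $\varepsilon > 0$ because $\omega > 0$, so its absolute value matches the derivative computed in the first step.

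The main delicacy I expect is the behavior at $\varepsilon = 0$, where the level set degenerates to a point and the Leibniz manipulation is valid only for $\varepsilon > 0$; the identity then extends to $\varepsilon = 0$ by continuity, both sides vanishing there (the left-hand side by the asymptotic $A_f(\varepsilon) = \tfrac{4}{3}\omega(0,0)\varepsilon^{3/2} + O(\varepsilon^2)$ established in the proof of Lemma~$2$, the right-hand side by definition). A more conceptual alternative, which I would keep in reserve, is to invoke Lemma~$4$: the map $(x,y) \mapsto (f, T_f)$ is a local diffeomorphism pulling back $dp \wedge dq$ to $\omega$, so Fubini gives $A_f(\varepsilon) = \int_0^\varepsilon |t_f(c)|\, dc$ directly (the $T_f$-image of the arc $\{f = c\} \cap \{y \geq 0\}$ being an interval of length $|t_f(c)|$), and differentiation closes the argument.
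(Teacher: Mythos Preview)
Your primary argument by direct Leibniz differentiation is correct: the boundary terms vanish because $u(\pm\sqrt{\varepsilon},\varepsilon)=0$, the interior term gives $\int_{-\sqrt{\varepsilon}}^{\sqrt{\varepsilon}}\omega(x,\varepsilon-x^2)\,dx$, and your identification of this with $|t_f(\varepsilon)|$ via the change of chart $(x,y)\leftrightarrow(x,z)$ (using $dx\wedge dz=dx\wedge dy$, so the density at a given point is unchanged) is exactly right.

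This route is genuinely different from the paper's. The paper does not differentiate the explicit double integral; instead it invokes Lemma~4 ($\omega=df\wedge dT_f$) and computes the increment $A_f(\varepsilon+\delta)-A_f(\varepsilon)$ as an integral of $dz\wedge dT_f$ over a thin strip, reading off $\delta\,|t_f(\varepsilon)|+o(\delta)$ by Fubini. In other words, the paper takes precisely the ``conceptual alternative'' you kept in reserve. Your primary approach has the advantage of being self-contained (it needs only Corollary~1 and the formula for $t_f$ from Lemma~3, not Lemma~4), while the paper's approach makes the geometric meaning --- area swept equals time along the level curve --- more transparent and generalizes immediately beyond the specific normal form $f=x^2+y$.
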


\begin{proof}
	To proof this, let us use the chart $(x,z)$ from Lemma~$3.$ Remind that in this chart $f(x,z)=z.$ Now it follows from the definition of $A_f$ and from Lemma~$4$ that 
	\begin{multline*}
	A_f(\varepsilon + \delta) - A_f(\varepsilon) =| \int\limits_{-\sqrt{\varepsilon}}^{\sqrt{\varepsilon}}\int\limits_\varepsilon^{\varepsilon+\delta}dz \wedge dT_f| + o(\delta)=
	|\int\limits_{-\sqrt{\varepsilon}}^{\sqrt{\varepsilon}} dT_f \int\limits_\varepsilon^{\varepsilon+\delta} dz |+ o(\delta) = \\
	= \delta |T_f(\sqrt{\varepsilon},0)-T_f(-\sqrt{\varepsilon},0)|+ o(\delta) = \delta |t_f(\varepsilon)| + o(\delta).
	\end{multline*}
	So $$\frac{d}{d\varepsilon}A_f(\varepsilon)=|t_f(\varepsilon)|.$$
	
\end{proof}

\begin{Lemma}
 	Suppose that after a coordinate transformation $(x,y) \rightarrow (p,q)$ the following conditions hold:
\begin{enumerate}
\item
$f(p,q) = p^2 + q.$
\item
$\omega = dp \wedge dq.$
\item
The equation $p=0$ describes the bisector.
\item
$A_f(\varepsilon)=\frac{4}{3}\varepsilon \sqrt{\varepsilon}.$

\end{enumerate}
	Then $y(p,q)=0$ if and only if $q=0$.
\end{Lemma}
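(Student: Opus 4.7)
My plan is to represent the image of the boundary $\{y=0\}$ in the $(p,q)$-chart as a graph $q=g(p)$, compute the Hamiltonian flow of $f$ explicitly in the new coordinates, and then use conditions~(3) and~(4) together with Lemma~5 to force $g\equiv 0$.

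First, from conditions~(1) and~(2) one computes $X_f=-\partial_p+2p\,\partial_q$, whose flow is
$$P_f^t(p_0,q_0)=\bigl(p_0-t,\;q_0+2p_0t-t^2\bigr).$$
Since $f=p^2+q=x^2+y$ on the overlap of the two charts, differentiating this identity at the origin in $x$ and $y$ yields $q_x(0,0)=0$ and $q_y(0,0)=1$. In particular the image of $\{y=0\}$ is tangent to the $p$-axis at the origin and can be written locally as $\Gamma=\{q=g(p)\}$ for a smooth $g$ with $g(0)=g'(0)=0$, while $q_y(0,0)=1>0$ identifies the image of the interior $\{y>0\}$ with $\{q>g(p)\}$.

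For small $\varepsilon>0$, the two endpoints of the arc $\{f=\varepsilon\}\cap M$ in the new chart are the roots $p_-(\varepsilon)<0<p_+(\varepsilon)$ of $p^2+g(p)=\varepsilon$. The explicit flow formula shows that the passage time from $(p_-,g(p_-))$ to $(p_+,g(p_+))$ is $t_f(\varepsilon)=p_--p_+$, and that the bisector point (reached at time $t_f(\varepsilon)/2$) has $p$-coordinate $(p_-+p_+)/2$. Condition~(3) forces this to vanish, i.e.\ $p_+(\varepsilon)=-p_-(\varepsilon)$. On the other hand, Lemma~5 combined with condition~(4) gives
$$|t_f(\varepsilon)|=A_f'(\varepsilon)=2\sqrt{\varepsilon},$$
so $p_+(\varepsilon)-p_-(\varepsilon)=2\sqrt{\varepsilon}$. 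Together these force $p_\pm(\varepsilon)=\pm\sqrt{\varepsilon}$, and substituting back into $p_\pm^2+g(p_\pm)=\varepsilon$ gives $g(\pm\sqrt{\varepsilon})=0$ for every small $\varepsilon\ge 0$. Hence $g$ vanishes on a full neighborhood of $0$, so $\Gamma=\{q=0\}$ and $y(p,q)=0$ if and only if $q=0$. The only delicate step I anticipate is the graph/orientation argument in the second paragraph; once $\Gamma$ is realised as $\{q=g(p)\}$ with $g'(0)=0$ and the correct side singled out as the interior, the rest is a direct application of the flow formula and Lemma~5.
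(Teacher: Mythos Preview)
Your proof is correct and follows essentially the same route as the paper: both write the boundary as a graph $q=g(p)$ (you via differentiating $p^2+q=x^2+y$ at the origin, the paper via the transversality of Lemma~3), use condition~(3) to force the symmetry $p_+=-p_-$, and then combine Lemma~5 with condition~(4) to obtain $p_\pm=\pm\sqrt{\varepsilon}$ and hence $g\equiv 0$. The only cosmetic difference is that you compute the Hamiltonian flow explicitly and argue directly, whereas the paper leaves the flow implicit and casts the final step as a contradiction.
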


\begin{proof}
First of all, by the Condition~$4$ the function $A'_f(\varepsilon)$ can be computed as:
\begin{equation}\label{eqn4}
\frac{d}{d\varepsilon}4/3\varepsilon \sqrt{\varepsilon}=2\sqrt{\varepsilon}.
\end{equation}

Let us check that $y=0$ if and only if $q=0.$ It is follows from Lemma~$3$ that the curve $\{y=0\}$ is transversal to the bisector $\{p=0\}$. So, the curve $y = 0$ is a graph of some function $q = r(p)$ (see Figure~$3$).
It follows from the definition of bisector that $r(x)=r(-x).$
Let us proof that $r(x)\equiv0$.  Assume that there exists some $p_0$ such that $q_0 := r(p_0)>0$ (the case $q_0<0$ is analogous).  
\begin{multline*}
A'_f(q_0+p_0^2) = [ \text{by equation~(\ref{eqn4})} ] = 2\sqrt{q_0+p_0^2} > |2 p_0|  \\
= [\text{by conditions (1),(2),(3) and the definition of }t_f] = |t_f(q_0+p_0^2)| \\
= [\text{by Lemma 5}] = A'_f(q_0+p_0^2).
\end{multline*}
This contradiction concludes the proof.

\end{proof}

\section{Proof of the main theorem}

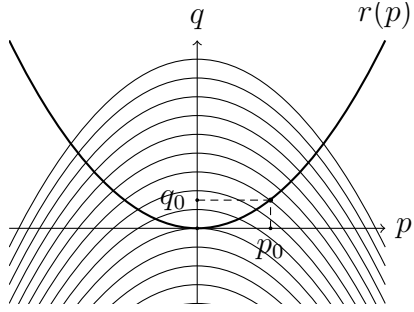
\begin{figure}
\label{fig3}
\centering
	\begin{tikzpicture}[scale=2.5]
	\filldraw (0,0) circle (0.2pt);
	\filldraw (0.39,0) circle (0.2pt) node[below] {$p_0$};
	\filldraw (0,0.15) circle (0.2pt) node[left] {$q_0$};
	\filldraw (0.39,0.15) circle (0.3pt);
	\clip (-1,-0.4) rectangle (1.2,1.2);
	\draw (-1,0) -- (0,0);
	\draw[densely dashed] (0.39,0) -- (0.39,0.15);
	\draw[densely dashed] (0,0.15) -- (0.39,0.15);
	\draw[->] (0,0) --  (1,0) node[right] {$p$};
	\draw[->] (0,-1) -- (0,1) node[above] {$q$};
	\draw[thick] (-1,1) parabola bend (0,0) (1,1) node[above] {$r(p)$};
		\foreach \x in {-0.4, -0.3, -0.2, -0.1, 0.0, 0.1, 0.2, 0.3, 0.4, 0.5, 0.6, 0.7, 0.8, 0.9}
	\draw  (-1,\x-1) parabola bend (0,\x) (1,\x-1);
	\end{tikzpicture}
\caption{Illustration to the proof of Lemma~5.}
\end{figure}

\begin{proof}

Consider the function  
$$\alpha(\varepsilon):=A_f^{-1}(\frac{4}{3}\varepsilon \sqrt{\varepsilon}).$$
It follows from Lemma 2 that  $\alpha$ is a smooth function.
Let also 
$$H(x,y) := [\alpha^{-1}\circ f](x,y)$$
$$p(x,y):=-T_H(x,y)$$ 
$$q(x,y):=H-p^2(x,y).$$
Then
\begin{multline*}
dp\wedge dq=-dT_H\wedge d(H-T_H^2)=\\
=-dT_H\wedge dH + dT_H\wedge 2T_H dT_H = dH\wedge dT_H= [\mbox{by Lemma 4}] = \omega,
\end{multline*}
so $dp$ and $dq$ are linearly independent. 
Further, in the chart $(p,q)$, we have

\begin{enumerate}
\item
$H(p,q)=p^2+q$ and $f(p,q)=\alpha(p^2+q).$
\item
$\omega=dp\wedge dq.$
\item
The equation $p=0$ describes the bisector, because $p(x,y)=0$ if and only if $T(x,y)=0,$ while the latter means that the point $(x,y)$ belongs to the bisector.
\item
$A_H(\varepsilon)=A_f(\alpha(\varepsilon))=A_f(A_f^{-1}(\frac{4}{3}\varepsilon \sqrt{\varepsilon}))= \frac{4}{3}\varepsilon \sqrt{\varepsilon}.$
\end{enumerate}

So the chart $(p, q)$ fulfils all conditions of Lemma~$6$.  And now it follows from Lemma~$6$ that the chart $(p,q)$ satisfies all conditions of Theorem~$1'.$

\end{proof}

\section{Acknowledgements} 
The author is grateful to A.M. Izosimov and A.A. Oshemkov for useful comments and discussions. This research is supported in part by the Russian Foundation for Basic Research (grant No. 16-01-00378-a), the program ‘‘Leading Scientific Schools’’ (grant no. NSh-6399.2018.1) and the Simons Foundation.

\bibliography{bibl}{} 
\end{document}